\documentclass[12pt,a4paper]{article}
\usepackage{amsmath,amssymb,amsbsy,amscd,amsthm}

\theoremstyle{plain}
\newtheorem{thm}{Theorem}[section]
\newtheorem{lem}[thm]{Lemma}
\newtheorem{prop}[thm]{Proposition}
\newtheorem{cor}[thm]{Corollary}

\newcommand{\trd }{\mathop{\rm trans.deg}\nolimits}
\newcommand{\supp }{\mathop{\rm supp}\nolimits}
\newcommand{\New }{\mathop{\rm New}\nolimits}
\newcommand{\spec }{\mathop{\rm Spec}\nolimits}
\newcommand{\uu}{{\bf u}}
\newcommand{\vv}{{\bf v}}
\newcommand{\w}{{\bf w}}
\newcommand{\id	}{{\rm id}}
\newcommand{\degw}{\deg_{\w}}
\newcommand{\degv}{\deg_{\vv}}
\newcommand{\degu}{\deg_{\uu}}
\newcommand{\zs}{\{ 0\} }
\newcommand{\sm}{\setminus}
\newcommand{\Ga}{{\bf G}_{\rm a}}
\newcommand{\C}{{\bf C}}
\newcommand{\R}{{\bf R}}
\newcommand{\Q}{{\bf Q}}
\newcommand{\Z}{{\bf Z}}
\newcommand{\x}{{\bf x}}
\newcommand{\kx}{k[{\bf x}]}
\newcommand{\kxb}{k[\overline{\bf x}]}

\begin{document}

\title{Initial forms of stable invariants for additive group actions}

\author{Shigeru Kuroda\thanks{
Partly supported by the Grant-in-Aid for 
Young Scientists (B) 24740022, 
Japan Society for the Promotion of Science. }}

\footnotetext{2010 {\it Mathematical Subject Classification}. 
Primary 14L30; Secondary 14R10. }

\date{}

\maketitle

\begin{abstract}
The Derksen--Hadas--Makar-Limanov theorem (2001) says that 
the invariants for nontrivial actions of the additive group 
on a polynomial ring have no intruder. 
In this paper, 
we generalize this theorem to the case of stable invariants. 
\end{abstract}

\section{Introduction}
\label{sect:intro}
\setcounter{equation}{0}

Throughout this paper, 
let $k$ be a domain unless otherwise stated, 
and $\kx =k[x_1,\ldots ,x_n]$ 
the polynomial ring in $n$ variables over $k$, 
where $n\geq 1$. 
For each 
\begin{equation}\label{eq:f}
f=\sum _{i_1,\ldots ,i_n}
u_{i_1,\ldots ,i_n}x_1^{i_1}\cdots x_n^{i_n}\in \kx 
\end{equation}
with $u_{i_1,\ldots ,i_n}\in k$, 
we define $\supp (f)=\{ (i_1,\ldots ,i_n)\mid 
u_{i_1,\ldots ,i_n}\neq 0\} $. 
We call the convex hull $\New (f)$ of 
$\supp (f)$ in $\R ^n$ the {\it Newton polytope} of $f$. 
A vertex $(i_1,\ldots ,i_n)$ of $\New (f)$ 
is called an {\it intruder} of $f$ 
if $i_l\neq 0$ for $l=1,\ldots ,n$.

Let $\Ga =\spec k[z]$ be the additive group, 
where $z$ is an indeterminate. 
A homomorphism $\sigma :\kx \to \kx [z]=\kx \otimes _kk[z]$ 
of $k$-algebras is called a $\Ga $-{\it action} on $\kx $ 
if $\pi \circ \sigma =\id _{\kx }$, 
and the diagram 
$$
\begin{CD}
\kx @>\sigma >>\kx \otimes _kk[z]\\
@V\sigma VV 
@VV\sigma \otimes \id _{k[z]}V \\
\kx \otimes _kk[z] @>>\id _{\kx }\otimes \mu > \kx \otimes _kk[z]\otimes _kk[z]
\end{CD}
$$
commutes. 
Here, 
$\pi :\kx [z]\to \kx $ 
and $\mu :k[z]\to k[z]\otimes _kk[z]$ 
are the homomorphisms of $\kx $-algebras 
and $k$-algebras defined by 
$\pi (z)=0$ and $\mu (z)=z\otimes 1+1\otimes z$, 
respectively. 
We note that $\sigma (f)=f$ if and only if $\sigma (f)$ 
belongs to $\kx $ for $f\in \kx $. 
When this is the case, 
we call $f$ an {\it invariant} for $\sigma $. 
The set $\kx ^{\Ga }:=\sigma ^{-1}(\kx )$ 
of invariants for $\sigma $ forms a $k$-subalgebra of $\kx $. 
The $\Ga $-action defined by the inclusion map 
$\kx \to \kx [z]$ is called a {\it trivial} $\Ga $-action. 
A $\Ga $-action is trivial if and only if $\kx ^{\Ga }=\kx $.

The Derksen--Hadas--Makar-Limanov 
theorem~\cite[Theorem 3.1]{DHM} 
says that the invariants 
for any nontrivial $\Ga$-action on $\kx $ have no intruder. 
This theorem implies that, 
if $f_1,\ldots ,f_n\in \kx $ 
satisfy $k[f_1,\ldots ,f_n]=\kx $, 
then no element of $k[f_2,\ldots ,f_n]$ 
has an intruder \cite[Corollary 3.2]{DHM}.

The purpose of this paper is to present 
``stable versions" of the results above. 
For $m\geq n$, 
let $\kxb =k[x_1,\ldots ,x_m]$ be the polynomial ring 
in $m$ variables over $k$. 
We call $f\in \kx $ 
a {\it stable $\Ga $-invariant} of $\kx $ 
if there exist $m\geq n$ and a $\Ga $-action on $\kxb$ 
for which $\kxb ^{\Ga }$ contains $f$, 
but does not contain $\kx $. 
If $f\in \kx $ is an invariant 
for some nontrivial $\Ga $-action on $\kx $, 
then $f$ is a stable $\Ga $-invariant by definition. 
However, 
it is not known whether the converse holds in general 
(cf.~Section~\ref{sect:rmk}).

We generalize the Derksen--Hadas--Makar-Limanov theorem 
as follows.

\begin{thm}\label{thm:Newton}
No stable $\Ga $-invariant of $\kx $ has an intruder. 
\end{thm}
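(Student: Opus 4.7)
I would argue by contradiction, carrying the DHM argument into the stable setting via initial forms. Suppose $f \in \kx$ has an intruder $\alpha = (i_1, \ldots, i_n) \in \Z_{>0}^n$, and let $\delta$ be the locally nilpotent $k$-derivation on some $\kxb$ corresponding to a $\Ga$-action witnessing $f$ as a stable invariant, so $\delta(f) = 0$ while $\delta(x_l) \neq 0$ for some $l \leq n$. Pick $w \in \R_{>0}^n$ so that $\alpha$ is the unique $w$-maximal vertex of $\New(f)$; then $\mathrm{in}_w(f) = c\, x_1^{i_1} \cdots x_n^{i_n}$ is a nonzero scalar multiple of a pure monomial in which every variable of $\kx$ appears to positive power.

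The central step, reflected in the paper's title, is to show that this initial form is again a stable $\Ga$-invariant. I would extend $w$ to a weight $\w \in \R^m$ on $\kxb$, form the initial derivation $\mathrm{in}_\w(\delta)$ assembled from the $\w$-leading parts of $\delta(x_1), \ldots, \delta(x_m)$, and conclude by a standard homogenization argument that $\mathrm{in}_\w(\delta)$ is locally nilpotent and satisfies $\mathrm{in}_\w(\delta)(\mathrm{in}_\w(f)) = 0$; here $\mathrm{in}_\w(f) = \mathrm{in}_w(f)$ since $f \in \kx$. The principal obstacle is to choose $w_{n+1}, \ldots, w_m$ so that $\mathrm{in}_\w(\delta)$ remains nontrivial on $\kx$, i.e.\ so that the maximum of $\deg_\w(\delta(x_j)) - w_j$ is attained at some $j \leq n$ with $\delta(x_j) \neq 0$. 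An indiscriminate choice can let the $\partial_{x_j}$-components with $j > n$ dominate and collapse the initial derivation on $\kx$; I expect that handling this requires a careful combinatorial analysis of the translated Newton polytopes $\New(\delta(x_j)) - e_j \subset \R^m$, possibly after enlarging $\kxb$ by auxiliary variables and extending $\delta$ appropriately.

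Granting the reduction, it remains to rule out the monomial case: no $c\,x_1^{i_1}\cdots x_n^{i_n}$ with every $i_l > 0$ is a stable $\Ga$-invariant. Given an LND $D$ on a polynomial ring extending $\kx$ annihilating $c\, x^\alpha$ and nontrivial on $\kx$, expanding $D(c\, x^\alpha) = 0$ via Leibniz and reducing modulo each $x_l$ for $l \leq n$ (using $i_l \neq 0$ in $k$) yields $x_l \mid D(x_l)$. The standard lemma that a locally nilpotent derivation $D$ of a polynomial domain with $D(a) \in (a)$ satisfies $D(a) = 0$ --- provable by observing that otherwise $\exp(tD)(a)/a$ would be the exponential of a nonzero polynomial in $t$ yet also a polynomial in $t$ --- then forces $D(x_l) = 0$ for all $l \leq n$, contradicting nontriviality. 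This monomial step is essentially the DHM argument specialized to a single vertex.
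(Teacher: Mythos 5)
There is a genuine gap at the step you yourself flag as the ``principal obstacle'': you never actually produce the weight $\w\in\R^m$ (nor the auxiliary variables) making the initial derivation $\mathrm{in}_\w(\delta)$ nontrivial on $\kx$, and without that the whole reduction collapses. The paper's resolution of exactly this difficulty is worth contrasting with your plan. It does \emph{not} form an initial derivation of the full derivation on $\kxb$ (which is where your domination problem for the components with $j>n$ comes from); instead it restricts the co-action to $\phi=\sigma|_{\kx}:\kx\to\kxb[z]$, so that only $\phi(x_1),\dots,\phi(x_n)=\sum_j p_{i,j}z^j$ ever enter, and it puts the weight on the variable $z$ itself, namely $\uu=(\vv,-\deg_{\vv}\phi)$ with $\deg_{\vv}\phi=\max\{\frac1j(\deg_{\vv}p_{i,j}-w_i)\mid 1\le i\le n,\ j\ge 1\}$ (the DHM trick). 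Because this maximum ranges only over $i\le n$, it is automatically attained at some $i_0\le n$, $j_0\ge 1$, which is precisely what guarantees $\phi^{\uu}(x_{i_0})\notin\kxb$, i.e.\ the nontriviality you are missing. The price is that the resulting initial object $\phi^{\uu}$ is no longer a $\Ga$-action or a derivation --- it is merely an injective $k$-algebra homomorphism --- so the paper also weakens the target of the reduction: rather than showing $f^{\w}$ is again a stable invariant, it shows $f^{\w}$ lies in the proper, algebraically and factorially closed subalgebra $\kx^{\phi^{\uu}}$, which has transcendence degree $<n$ and hence (by the product-and-factorial-closedness argument) cannot absorb $x_1,\dots,x_n$. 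Your plan insists on keeping an honest locally nilpotent derivation throughout, and it is not clear this can be arranged in the stable setting; that is precisely why the paper generalizes away from $\Ga$-actions in Theorem~\ref{thm:main}.

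A second, independent problem: the paper proves Theorem~\ref{thm:Newton} over an arbitrary domain $k$, whereas your argument is tied to characteristic zero twice over --- $\Ga$-actions correspond to locally nilpotent derivations only when $\Q\subseteq k$, and your monomial endgame needs $i_l\neq 0$ in $k$, which fails when $\mathrm{char}\,k=p$ divides $i_l$. (Your endgame is otherwise sound in characteristic zero and is essentially DHM's original argument; the paper's factorial-closedness endgame is what upgrades the conclusion to the stronger divisibility statement of Theorem~\ref{thm:stb inv}.) As it stands, the proposal is an outline with its key step unproved rather than a proof.
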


This theorem is a consequence of 
a more general result as follows. 
Let $\Gamma $ be a 
{\it totally ordered additive group}, 
i.e., 
an additive group 
equipped with a total ordering such that 
$\alpha \leq \beta $ implies 
$\alpha +\gamma \leq \beta +\gamma $ 
for each $\alpha ,\beta ,\gamma \in \Gamma $. 
For example, 
$\R $ is a totally ordered additive group 
for the standard ordering. 
Take any 
$\w =(w_1,\ldots ,w_n)\in \Gamma ^n$. 
We denote $a\cdot \w =a_1w_1+\cdots +a_nw_n$ 
for $a=(a_1,\ldots ,a_n)\in \Z ^n$. 
For each $f\in \kx \sm \zs $, 
we define the $\w $-{\it degree} $\degw f$ 
and $\w $-{\it initial form} $f^{\w }$ 
by 
$$
\degw f=\max \{ a\cdot \w \mid 
a\in \supp (f)\}\text{ \ and \ }
f^{\w }=\sum _{i_1,\ldots ,i_n}u'_{i_1,\ldots ,i_n}
x_1^{i_1}\cdots x_n^{i_n}, 
$$
where $u'_{i_1,\ldots ,i_n}=u_{i_1,\ldots ,i_n}$ 
if $(i_1,\ldots ,i_n)\cdot \w =\degw f$, 
and $u'_{i_1,\ldots ,i_n}=0$ otherwise. 
When $f=0$, 
we define $\degw f=-\infty $ and $f^{\w }=0$. 
Then, 
it holds that 
\begin{equation}\label{eq:deg fg}
\degw fg=\degw f+\degw g\quad
\text{and}\quad (fg)^{\w }=f^{\w }g^{\w }
\end{equation}
for each $f,g\in \kx $. 
We remark that 
$f\in \kx \sm \zs $ has no intruder if and only if, 
for each $\w \in \R ^n$ with $f^{\w }$ a monomial, 
there exists $1\leq i\leq n$ 
such that $x_i$ does not divide $f^{\w }$.

Now, 
let $\phi :\kx \to \kxb [z]$ 
be a homomorphism of $k$-algebras 
such that $\phi (\kx )$ 
is not contained in $\kxb $, 
and $\w $ an element of $\Gamma ^n$ 
which satisfies the following condition, 
where $p_i(z):=\phi (x_i)\in \kxb [z]$ for each $i$:

\smallskip 

\noindent ($\ast $) There exists 
$\vv \in \Gamma ^m$ such that 
$p_1(0)^{\vv },\ldots ,p_n(0)^{\vv }$ 
are algebraically independent over $k$, 
and $\degv p_i(0)=w_i$ for $i=1,\ldots ,n$.

\smallskip

In this situation, 
$\kx ^{\phi }:=\phi ^{-1}(\kxb )$ 
is a proper $k$-subalgebra of $\kx $. 
The following theorem 
will be proved in the next section.

\begin{thm}\label{thm:main}
Let $\phi :\kx \to \kxb [z]$ 
and $\w\in \Gamma ^n$ be as above, 
and $S\subset \kx \sm \zs $ 
such that $\trd _kk[S]=n$. 
Then, 
there exists $g\in S$ such that 
$g$ does not divide $f^{\w }$ 
for any $f\in \kx ^{\phi }\sm \zs $. 
\end{thm}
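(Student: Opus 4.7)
The plan is to argue by contradiction, using the substitution $\rho:\kx\to\kxb$ defined by $\rho(x_i)=p_i(0)^{\vv}$ to transfer the problem into the polynomial subring $A:=k[p_1(0)^{\vv},\ldots,p_n(0)^{\vv}]$ of $\kxb$. Because $p_1(0)^{\vv},\ldots,p_n(0)^{\vv}$ are algebraically independent over $k$, $\rho$ is injective (in fact an isomorphism onto $A$), and a monomial-by-monomial computation using $\deg_{\vv}p_i(0)=w_i$ together with \eqref{eq:deg fg} gives $\deg_{\vv}\rho(f)=\deg_{\w}f$ and $\rho(f)^{\vv}=\rho(f^{\w})$ for every $f\in\kx\sm\zs$. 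When $f\in\kx^{\phi}$, the relation $\phi(f)=\phi(f)|_{z=0}=f(p_1(0),\ldots,p_n(0))$ combined with the previous identity gives the crucial equality $\phi(f)^{\vv}=\rho(f^{\w})$.

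A preliminary step is the transcendence-degree bound $\trd_k\kx^{\phi}\le n-1$. Injectivity of $\rho$ forces $\phi$ to be injective; if $\trd_k\kx^{\phi}$ were $n$, then $\kx$ would be algebraic over $\kx^{\phi}$, hence $\mathrm{Frac}\,\phi(\kx)$ would be algebraic over $\mathrm{Frac}\,\phi(\kx^{\phi})\subseteq\mathrm{Frac}\,\kxb$. Since $z$ is transcendental over $\mathrm{Frac}\,\kxb$, this would force $\phi(\kx)\subseteq\kxb$, contradicting the hypothesis.

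Assume now, for contradiction, that every $g\in S$ divides $f^{\w}$ for some $f\in\kx^{\phi}\sm\zs$. A nonzero divisor of a $\w$-homogeneous polynomial in $\kx$ is itself $\w$-homogeneous, so each such $g$ must be $\w$-homogeneous. Using $\trd_k k[S]=n$, pick $g_1,\ldots,g_n\in S$ algebraically independent over $k$ together with $f_i\in\kx^{\phi}\sm\zs$ satisfying $g_i\mid f_i^{\w}$. Setting $F=f_1\cdots f_n\in\kx^{\phi}$ and $G=g_1\cdots g_n$, multiplicativity of initial forms gives $F^{\w}=G\cdot H$ for some $H\in\kx$, and applying $\rho$ yields
\[
\phi(F)^{\vv}=\rho(g_1)\cdots\rho(g_n)\,\rho(H)\quad\text{in }A.
\]

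The main difficulty is extracting a contradiction from this last equation. On one hand, $\phi(F)^{\vv}$ lies in the graded subring $C\subseteq A$ of $\vv$-initial forms of elements of $\phi(\kx^{\phi})$, whose transcendence degree equals $\trd_k\phi(\kx^{\phi})=\trd_k\kx^{\phi}\le n-1$. On the other hand, the $n$ factors $\rho(g_1),\ldots,\rho(g_n)$ are algebraically independent in the polynomial ring $A$ of transcendence degree exactly $n$, and so serve as an alternative set of ``coordinates'' that exhaust the transcendental freedom of $A$. My plan to close is to pick a transcendence basis $c_1,\ldots,c_d$ of $C$ (with $d\le n-1$) and, using the displayed identity together with the isomorphism $\rho:\kx\xrightarrow{\sim}A$, to produce an algebraic relation over $k$ among $g_1,\ldots,g_n$ that contradicts their algebraic independence. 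Making this transcendence-basis manipulation precise---in particular, controlling the auxiliary factor $\rho(H)$ so that the putative algebraic dependence survives the division---is the principal technical hurdle.
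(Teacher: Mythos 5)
Your preparatory steps are sound (the identity $\phi(f)^{\vv}=\rho(f^{\w})$ for $f\in \kx ^{\phi }$ is essentially the paper's Proposition~2.3 specialized to $\psi (x_i)=p_i(0)$, and your argument that $\trd _k\kx ^{\phi }\leq n-1$ is correct), but the concluding step you flag as ``the principal technical hurdle'' is a genuine gap, and it cannot be closed along the lines you sketch. From $\rho (g_1)\cdots \rho (g_n)\rho (H)=\phi (F)^{\vv }\in C$ with $\trd _kC\leq n-1$ you want to extract an algebraic relation among $\rho (g_1),\ldots ,\rho (g_n)$; but a product lying in a subring of small transcendence degree imposes no condition whatsoever on its factors unless that subring is \emph{factorially closed} in the ambient ring. (Compare $xy\in k[xy]$: the subring has transcendence degree $1$, yet $x$ and $y$ are algebraically independent.) Your ring $C$ of $\vv $-initial forms of $\phi (\kx ^{\phi })$ is not factorially closed in $A$ in general --- e.g.\ for $\kx ^{\phi }=k[x_1+x_2^2]$ and $\w =(1,1)$ the initial-form algebra is $k[x_2^2]$, which is not factorially closed in $k[x_1,x_2]$ --- so no choice of transcendence basis of $C$ and no control of the cofactor $\rho (H)$ will produce the desired dependence. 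Note also that your map $\rho $ discards the variable $z$ entirely, whereas the hypothesis $\phi (\kx )\not\subset \kxb $ (the only source of properness) lives precisely in the $z$-direction; any argument that forgets $z$ at the outset cannot see why the relevant subring is proper.

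The paper's proof supplies exactly the missing ingredient by a different construction: it introduces the weight $\uu =(\vv ,-\degv \phi )$ on $\kxb [z]$, with a carefully chosen \emph{negative} weight on $z$, and forms the initial homomorphism $\phi ^{\uu }:\kx \to \kxb [z]$, $x_i\mapsto \phi (x_i)^{\uu }$. The choice of $-\degv \phi $ as a maximum guarantees simultaneously that $\uu _{\phi }=\w $, that $\phi ^{\uu }$ is injective (its components still contain the algebraically independent constant terms $p_i(0)^{\vv }$), and that $\phi ^{\uu }(\kx )\not\subset \kxb $ (some term $p_{i_0,j_0}z^{j_0}$ with $j_0\geq 1$ survives in the initial form). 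Consequently $\kx ^{\phi ^{\uu }}=(\phi ^{\uu })^{-1}(\kxb )$ is a \emph{proper}, algebraically closed and factorially closed $k$-subalgebra of $\kx $ (Lemma~2.1) which contains $f^{\w }$ for every $f\in \kx ^{\phi }$; divisibility of $(f_1\cdots f_n)^{\w }$ by $y_1,\ldots ,y_n$ then forces $y_1,\ldots ,y_n\in \kx ^{\phi ^{\uu }}$, contradicting $\trd _k\kx ^{\phi ^{\uu }}<n$. This factorially closed subring of $\kx $ itself, obtained by keeping $z$ in play with a negative weight, is the idea your proposal is missing.
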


If $f$ is a stable $\Ga $-invariant of $\kx $, 
then there exist $m\geq n$ 
and a $\Ga $-action $\sigma :\kxb \to \kxb [z]$ 
such that $\sigma ^{-1}(\kxb )$ contains $f$, 
but does not contain $\kx $. 
We define $\phi =\sigma |_{\kx }$. 
Then, 
$\phi (\kx )=\sigma (\kx )$ 
is not contained in $\kxb $. 
We claim that 
any $\w \in \Gamma ^n$ satisfies ($\ast $) 
for this $\phi $. 
In fact, 
since $\pi \circ \sigma =\id _{\kxb }$, 
we have 
$p_i(0)=\pi (\sigma (x_i))=x_i$ 
for each $i$. 
Hence, 
($\ast $) holds for 
$\vv =(\w ,0,\ldots ,0)\in \Gamma ^m$. 
Clearly, 
$f$ belongs to 
$\sigma ^{-1}(\kxb )\cap \kx =\phi ^{-1}(\kxb )=\kx ^{\phi }$. 
Therefore, 
we obtain the following theorem 
as a consequence of Theorem~\ref{thm:main}.

\begin{thm}\label{thm:stb inv}
Let $f$ be a nonzero stable $\Ga $-invariant 
of $\kx $, 
and $S\subset \kx \sm \zs $ 
such that $\trd _kk[S]=n$. 
Then, 
$f^{\w }$ is not divisible by an element of $S$ 
for each $\w \in \Gamma ^n$. 
\end{thm}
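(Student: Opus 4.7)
The plan is to derive Theorem~\ref{thm:stb inv} as an immediate consequence of Theorem~\ref{thm:main}, by producing, from the given stable $\Ga $-invariant, a $k$-algebra homomorphism $\phi $ satisfying the hypotheses of that theorem. The deduction is essentially the bookkeeping argument already sketched in the paragraph preceding the statement, so the real mathematical content is concentrated in Theorem~\ref{thm:main}, whose proof is deferred to the next section.

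First, I would unpack the definition of a stable $\Ga $-invariant: for the given nonzero $f\in \kx $, there exist $m\geq n$ and a $\Ga $-action $\sigma :\kxb \to \kxb [z]$ with $f\in \sigma ^{-1}(\kxb )$ and $\kx \not\subseteq \sigma ^{-1}(\kxb )$. The natural candidate to feed into Theorem~\ref{thm:main} is $\phi :=\sigma |_{\kx }:\kx \to \kxb [z]$, and the failure $\kx \not\subseteq \sigma ^{-1}(\kxb )$ translates at once into $\phi (\kx )=\sigma (\kx )\not\subseteq \kxb $, providing the first hypothesis of Theorem~\ref{thm:main}.

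Next, I would verify condition ($\ast $) for the given $\w \in \Gamma ^n$. The key observation is the axiom $\pi \circ \sigma =\id _{\kxb }$ of a $\Ga $-action, which forces $p_i(0)=\pi (\sigma (x_i))=x_i$ for $i=1,\ldots ,n$. Setting $\vv :=(\w ,0,\ldots ,0)\in \Gamma ^m$ then gives $p_i(0)^{\vv }=x_i$ and $\degv p_i(0)=w_i$; since $x_1,\ldots ,x_n$ are algebraically independent over $k$, condition ($\ast $) holds. Since moreover $f\in \sigma ^{-1}(\kxb )\cap \kx =\phi ^{-1}(\kxb )=\kx ^{\phi }$, applying Theorem~\ref{thm:main} to this $\phi $, the given $\w $, and the prescribed $S$ yields some $g\in S$ that does not divide $f^{\w }$ (in fact, does not divide $h^{\w }$ for any nonzero $h\in \kx ^{\phi }$). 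This is precisely the conclusion of Theorem~\ref{thm:stb inv}.

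The deduction itself presents no obstacle: every verification is immediate from the $\Ga $-action axioms and the choice $\vv =(\w ,0,\ldots ,0)$. The main difficulty of the paper is therefore entirely absorbed into Theorem~\ref{thm:main}, where one must produce a single $g\in S$ that uniformly fails to divide the $\w $-initial form of any nonzero element of $\kx ^{\phi }$; this presumably requires exploiting the algebraic independence in~($\ast $) together with the multiplicativity~(\ref{eq:deg fg}) of $\w $-initial forms to transport degree information from $\kxb $ back along $\phi $.
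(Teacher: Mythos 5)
Your deduction is correct and coincides with the paper's own argument: the paper likewise sets $\phi =\sigma |_{\kx }$, uses $\pi \circ \sigma =\id _{\kxb }$ to get $p_i(0)=x_i$, and verifies ($\ast $) with $\vv =(\w ,0,\ldots ,0)$ before invoking Theorem~\ref{thm:main}. No discrepancies.
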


Since $S=\{ x_1,\ldots ,x_n\} $ satisfies $\trd _kk[S]=n$, 
Theorem~\ref{thm:Newton} 
follows from Theorem~\ref{thm:stb inv} 
by virtue of the above remark on intruders. 
As another application of Theorem~\ref{thm:stb inv}, 
we obtain the following theorem.

\begin{thm}\label{thm:coords}
Let $m\geq n$ and 
$f_1,\ldots ,f_m\in \kxb $ 
be such that $k[f_1,\ldots ,f_m]=\kxb $ 
and $\kx $ is not contained in $k[f_2,\ldots ,f_m]$, 
and let $S\subset \kx \sm \zs $ be 
such that $\trd _kk[S]=n$. 
Then, 
for each $\w \in \Gamma ^n$, 
there exists $g\in S$ 
such that $g$ does not divide $f^{\w }$ 
for any $f\in k[f_2,\ldots ,f_m]\cap \kx \sm \zs $. 
\end{thm}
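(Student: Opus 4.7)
The plan is to deduce Theorem~\ref{thm:coords} directly from Theorem~\ref{thm:main} by constructing a suitable homomorphism $\phi:\kx \to \kxb[z]$ whose invariant subring $\kx^{\phi}$ coincides with $k[f_2,\ldots ,f_m]\cap \kx$.

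First, since $k[f_1,\ldots ,f_m]=\kxb$, the elements $f_1,\ldots ,f_m$ are algebraically independent over $k$, so there is a well-defined $k$-algebra isomorphism $\kxb \to k[y_1,\ldots ,y_m]$ sending $f_i$ to $y_i$. Through this isomorphism I would transfer the standard translation $\Ga $-action $y_1\mapsto y_1+z$, $y_i\mapsto y_i$ ($i\geq 2$) back to a $\Ga $-action $\sigma :\kxb \to \kxb [z]$ whose invariant ring is exactly $k[f_2,\ldots ,f_m]$. Set $\phi :=\sigma |_{\kx}$. Since $\kx $ is not contained in $k[f_2,\ldots ,f_m]=\kxb ^{\Ga }$, some $x_i$ satisfies $\sigma (x_i)\notin \kxb $, so $\phi (\kx )\not\subset \kxb $ as required. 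Moreover, $\kx ^{\phi }=\phi ^{-1}(\kxb )=\sigma ^{-1}(\kxb )\cap \kx =k[f_2,\ldots ,f_m]\cap \kx $.

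Next, I would verify condition ($\ast $) for an arbitrary $\w \in \Gamma ^n$. Because $\pi \circ \sigma =\id _{\kxb }$, we have $p_i(0)=\pi (\sigma (x_i))=x_i$ for each $i=1,\ldots ,n$. Since the $x_i$ are monomials, their $\vv$-initial forms equal themselves for every $\vv \in \Gamma ^m$, and $\degv x_i=v_i$. Choosing $\vv =(w_1,\ldots ,w_n,0,\ldots ,0)\in \Gamma ^m$ therefore makes $p_1(0)^{\vv },\ldots ,p_n(0)^{\vv }=x_1,\ldots ,x_n$ algebraically independent over $k$ while giving $\degv p_i(0)=w_i$; that is exactly ($\ast $).

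With these setups in place, Theorem~\ref{thm:main} applied to $\phi $, $\w $, and $S$ yields a $g\in S$ that divides no $f^{\w }$ with $f\in \kx ^{\phi }\sm \zs =k[f_2,\ldots ,f_m]\cap \kx \sm \zs $, which is the desired conclusion. The argument is essentially a formal reduction; there is no real obstacle beyond the routine verifications, but the one step that merits care is confirming that the $\Ga $-action $\sigma $ really does have $k[f_2,\ldots ,f_m]$ as its ring of invariants (so that the invariants of $\phi $ are identified correctly) and that $\phi (\kx )\not\subset \kxb $ genuinely follows from the hypothesis $\kx \not\subset k[f_2,\ldots ,f_m]$.
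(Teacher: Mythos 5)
Your proposal is correct and follows essentially the same route as the paper: the paper likewise takes the translation $\Ga$-action $f_1\mapsto f_1+z$, $f_i\mapsto f_i$ on $\kxb$ (so that $\kxb ^{\Ga }=k[f_2,\ldots ,f_m]$), restricts it to $\kx$, and verifies condition ($\ast$) via $p_i(0)=x_i$ and $\vv =(\w ,0,\ldots ,0)$. Your only deviation is to invoke Theorem~\ref{thm:main} directly rather than passing through the notion of stable $\Ga$-invariant and Theorem~\ref{thm:stb inv}, which if anything is the cleaner way to obtain the single $g\in S$ working uniformly for all $f$.
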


In fact, 
we have $\kxb ^{\Ga }=k[f_2,\ldots ,f_m]$ 
for the $\Ga $-action on $\kxb $ defined by 
$f_1\mapsto f_1+z$ 
and $f_i\mapsto f_i$ for each $i\geq 2$. 
Hence, 
every element of $k[f_2,\ldots ,f_m]\cap \kx $ 
is a stable $\Ga $-invariant of $\kx $ 
unless $\kx $ is contained in $k[f_2,\ldots ,f_m]$.

We call $f\in \kx $ a {\it coordinate} of $\kx $ 
if there exist $f_2,\ldots ,f_n\in \kx $ 
such that $k[f,f_2,\ldots ,f_n]=\kx $, 
and a {\it stable coordinate} of $\kx $ 
if there exists $m\geq n$ such that 
$f$ is a coordinate of $\kxb $. 
By definition, 
every coordinate of $\kx $ 
is a stable coordinate of $\kx $. 
Since $k$ is a domain, 
the converse is clear if $n=1$. 
If $n=2$, however, 
not every stable coordinate of $\kx $ 
is a coordinate of $\kx $ 
(cf.~\cite{BD}; see also Section~\ref{sect:rmk}).

Assume that $n\geq 2$, 
and let $f_1$ be a stable coordinate of $\kx $. 
Then, 
there exist $m\geq n$ 
and $f_2,\ldots ,f_m\in \kxb $ 
such that $k[f_1,\ldots ,f_m]=\kxb $. 
Since $n\geq 2$, 
we see that 
$\kx $ is not contained in 
$k[f_1]=\bigcap _{i=2}^mk[\{ f_j\mid j\neq i\} ]$. 
Hence, 
there exists $2\leq i_0\leq m$ such that 
$\kx $ is not contained in $k[\{ f_j\mid j\neq i_0\} ]$. 
Since $f_1$ belongs to 
$k[\{ f_j\mid j\neq i_0\} ]\cap \kx \sm \zs $, 
we obtain the following corollary to 
Theorem~\ref{thm:coords}.

\begin{cor}\label{cor:stb coord}
Assume that $n\geq 2$. 
Let $f$ be a stable coordinate of $\kx $, 
and let $S\subset \kx \sm \zs $ 
be such that $\trd _kk[S]=n$. 
Then, 
$f^{\w }$ is not divisible by an element of $S$ 
for each $\w \in \Gamma ^n$. 
\end{cor}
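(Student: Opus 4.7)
The plan is to reduce directly to Theorem~\ref{thm:coords}, essentially following the recipe already used in the excerpt to derive that theorem from Theorem~\ref{thm:stb inv}. Unpacking the definition of a stable coordinate, I first fix $m\geq n$ and $f_2,\ldots ,f_m\in \kxb $ with $k[f,f_2,\ldots ,f_m]=\kxb $, and set $f_1:=f$.

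The key preliminary observation is the identity
$$
k[f_1]=\bigcap _{i=2}^mk[\{ f_j\mid j\neq i\} ].
$$
Since $f_1,\ldots ,f_m$ generate the polynomial ring $\kxb $ in $m$ variables over the domain $k$, they are algebraically independent and behave like indeterminates, so this identity is immediate by comparing monomial expansions. Now the hypothesis $n\geq 2$ gives $\trd _kk[f_1]=1<n=\trd _k\kx $, hence $\kx \not\subset k[f_1]$; combined with the above intersection identity, this forces the existence of some index $2\leq i_0\leq m$ with $\kx \not\subset k[\{ f_j\mid j\neq i_0\} ]$.

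The final step is to relabel the coordinate system so that $f_{i_0}$ becomes the distinguished ``first'' coordinate, and apply Theorem~\ref{thm:coords} to the resulting generating set of $\kxb $. The theorem yields some $g\in S$ that does not divide $h^{\w }$ for any nonzero $h\in k[\{ f_j\mid j\neq i_0\} ]\cap \kx $. Since $f=f_1$ lies in $k[\{ f_j\mid j\neq i_0\} ]\cap \kx \sm \zs $, we conclude that $g$ does not divide $f^{\w }$, which is the desired statement. No step poses a real obstacle here; the only thing worth verifying carefully is the intersection identity in the preliminary observation, and even that is a standard fact once one recognises the $f_j$ as abstract indeterminates.
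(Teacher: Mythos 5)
Your proposal is correct and follows essentially the same route as the paper: unpack the definition of stable coordinate, use the identity $k[f_1]=\bigcap_{i=2}^mk[\{ f_j\mid j\neq i\} ]$ together with $n\geq 2$ to find an index $i_0$ with $\kx \not\subset k[\{ f_j\mid j\neq i_0\} ]$, and then apply Theorem~\ref{thm:coords}. The only differences are cosmetic (you spell out the transcendence-degree and monomial-expansion justifications that the paper leaves implicit).
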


It is possible that an element of $\kx $ 
which is not a coordinate of $\kx $ 
becomes a coordinate of $k_0[\x ]$, 
where $k_0$ is the field of fractions of $k$. 
Hence, 
an element of $\kx $ 
which is not a stable coordinate of $\kx $ 
can be a stable coordinate of $k_0[\x ]$. 
We note that the conclusion of 
Corollary~\ref{cor:stb coord} holds 
if only $f\in \kx $ 
is a stable coordinate of $k_0[\x ]$. 
Similarly, 
the conclusion of Theorem~\ref{thm:stb inv} 
holds if only $f\in \kx \sm \zs $ 
is a stable $\Ga $-invariant of $k_0[\x ]$.

The author would like to thank 
Prof.\ Hideo Kojima for pointing out 
Freudenburg's paper mentioned in 
Section~\ref{sect:rmk}, 
and Prof.\ Amartya K. Dutta and Prof.\ Neena Gupta 
for the invaluable remarks 
in Section~\ref{sect:rmk}.

\section{Proof of Theorem~\ref{thm:main}}
\setcounter{equation}{0}

For a domain $R$ and a subring $S$ of $R$, 
we say that $S$ is {\it factorially closed} in $R$ 
if $ab\in S$ implies $a,b\in S$ 
for each $a,b\in R\sm \zs $. 
We remark that $R$ 
is algebraically closed 
and factorially closed in the polynomial ring 
$R[x_1,\ldots ,x_n]$ if $R$ is a domain.

Let $R$ and $R'$ be domains, 
and $\psi :R\to R'[x_1,\ldots ,x_n]$ 
a homomorphism of rings. 
Then, 
the following lemma holds for $R^{\psi }:=\psi ^{-1}(R')$.

\begin{lem}\label{lem:closed}
If $\psi $ is injective, 
then $R^{\psi }$ is algebraically closed 
and factorially closed in $R$. 
\end{lem}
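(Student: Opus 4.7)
My plan is to reduce both assertions to the remark immediately preceding the lemma, namely that $R'$ is itself algebraically closed and factorially closed inside the polynomial ring $R'[x_1,\ldots ,x_n]$. The role of injectivity of $\psi $ will be to guarantee that nonzero elements of $R$ have nonzero images, so that the properties of $R'\subset R'[x_1,\ldots ,x_n]$ transfer back through $\psi $.

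For factorial closedness, I would take $a,b\in R\sm \zs $ with $ab\in R^{\psi }$. Applying $\psi $ gives $\psi (a)\psi (b)=\psi (ab)\in R'$. Since $\psi $ is injective and $a,b\neq 0$, both $\psi (a)$ and $\psi (b)$ are nonzero elements of $R'[x_1,\ldots ,x_n]$. The factorial closedness of $R'$ in $R'[x_1,\ldots ,x_n]$ then forces $\psi (a),\psi (b)\in R'$, i.e., $a,b\in R^{\psi }$, as desired.

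For algebraic closedness, I would take $a\in R$ algebraic over $R^{\psi }$, so that there is a nonzero polynomial $p(t)=c_0+c_1t+\cdots +c_dt^d$ with coefficients $c_i\in R^{\psi }$ and $p(a)=0$. Applying $\psi $ gives the relation $\sum _{i=0}^d\psi (c_i)\psi (a)^i=0$ in $R'[x_1,\ldots ,x_n]$, whose coefficients $\psi (c_i)$ all lie in $R'$. Injectivity of $\psi $ ensures that at least one $\psi (c_i)$ is nonzero (since at least one $c_i$ is), so this is a nontrivial algebraic relation for $\psi (a)$ over $R'$. Because $R'$ is algebraically closed in $R'[x_1,\ldots ,x_n]$, we conclude $\psi (a)\in R'$, whence $a\in R^{\psi }$.

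There is no serious obstacle here: the only point one must be careful about is that injectivity is genuinely used to keep images nonzero in both parts. The remark that polynomial coefficient rings are algebraically and factorially closed inside the polynomial ring is standard (for factorial closedness it follows from comparison of degrees in the $x_i$, and for algebraic closedness one reads off the leading term of a supposed polynomial relation for a nonconstant element).
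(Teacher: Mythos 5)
Your proof is correct and follows essentially the same route as the paper: the paper also reduces both claims to the fact that $R'$ is algebraically and factorially closed in $R'[x_1,\ldots ,x_n]$ and transfers them back through the injective map $\psi $, merely stating this in one line where you spell out the two verifications.
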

\begin{proof}
Since $R'$ is algebraically closed 
and factorially closed in $R'[x_1,\ldots ,x_n]$, 
we see that $\psi (R)\cap R'$ 
is algebraically closed and factorially closed
in $\psi (R)$. 
Hence, 
the lemma follows by the injectivity of $\psi $. 
\end{proof}

For $\w \in \Gamma ^n$ 
and $(0,\ldots ,0)\neq (f_1,\ldots ,f_l)\in \kx ^l$ 
with $l\geq 1$, 
we set 
$$
\delta =\max \{ \degw f_i\mid i=1,\ldots ,l\} 
\quad\text{and}\quad 
I=\{ i\mid \degw f_i=\delta \} . 
$$
Then, 
we have the following lemma, 
which can be verified easily.

\begin{lem}\label{lem:IP}
If $\sum _{i\in I}f_i^{\w }\neq 0$, 
then we have 
$(f_1+\cdots +f_l)^{\w }=\sum _{i\in I}f_i^{\w }$. 
\end{lem}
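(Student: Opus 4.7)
The plan is to unwind the definitions and compare the contributions to each $\w$-graded piece. First I would record the trivial observation that for any $f,g\in\kx$, $\supp(f+g)\subseteq \supp(f)\cup\supp(g)$, so in particular $\degw(f+g)\leq \max(\degw f,\degw g)$. Also, by construction, $f-f^{\w}$ has $\w$-degree strictly less than $\degw f$ (or is zero), because $f^{\w}$ collects exactly the monomials of $f$ whose exponent vectors pair to $\degw f$ under $\w$.

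Next I would split the sum as $f_1+\cdots+f_l=\sum_{i\in I}f_i+\sum_{i\notin I}f_i$ and handle the two pieces separately. For each $i\notin I$ we have $\degw f_i<\delta$ by the definition of $I$, so the second sum has $\w$-degree strictly less than $\delta$. For each $i\in I$, write $f_i=f_i^{\w}+(f_i-f_i^{\w})$, where the remainder has $\w$-degree less than $\delta$. Adding these decompositions gives
\[
f_1+\cdots+f_l \;=\; \Bigl(\sum_{i\in I}f_i^{\w}\Bigr) + h,
\]
where $h$ collects all terms of $\w$-degree strictly less than $\delta$.

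Now I would invoke the hypothesis that $\sum_{i\in I}f_i^{\w}\neq 0$. Since every monomial in $\sum_{i\in I}f_i^{\w}$ has exponent $a$ satisfying $a\cdot\w=\delta$, this element is itself its own $\w$-initial form of $\w$-degree $\delta$. Combined with the bound $\degw h<\delta$ and the first observation, no cancellation at degree $\delta$ can occur, so $\degw(f_1+\cdots+f_l)=\delta$ and
\[
(f_1+\cdots+f_l)^{\w} \;=\; \sum_{i\in I}f_i^{\w},
\]
as desired.

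There is no real obstacle here: the lemma is essentially a bookkeeping statement about graded decompositions, and the only subtle point is that the nonvanishing hypothesis is precisely what rules out leading-term cancellation (which is why the statement has to be conditional). I would therefore expect the formal write-up to be just a clean coefficient-by-coefficient verification along the lines above.
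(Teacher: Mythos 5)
Your argument is correct: the decomposition $f_1+\cdots+f_l=\sum_{i\in I}f_i^{\w}+h$ with $\degw h<\delta$, together with the observation that the nonvanishing hypothesis prevents the degree-$\delta$ part from disappearing, is exactly the ``easy verification'' the paper has in mind (the paper states the lemma without proof, remarking only that it can be verified easily). Nothing is missing; this is the standard bookkeeping argument.
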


Now, 
let $\psi :\kx \to \kxb $ 
be a homomorphism of $k$-algebras 
with $\psi (x_i)\neq 0$ for each $i$. 
For $\uu \in \Gamma ^m$, 
we define a homomorphism 
$\psi ^{\uu }:\kx \to \kxb $ 
of $k$-algebras by 
$\psi ^{\uu }(x_i)=\psi (x_i)^{\uu }$ 
for $i=1,\ldots ,n$. 
Set 
$$
\uu _{\psi }=(\degu \psi (x_1),\ldots ,\degu \psi (x_n))
\in \Gamma ^n. 
$$
With this notation, 
the following proposition holds.

\begin{prop}\label{prop:IP}
If $\psi ^{\uu }(f^{\uu _{\psi }})\neq 0$ 
for $f\in \kx $, 
then we have 
$\psi (f)^{\uu }=\psi ^{\uu }(f^{\uu _{\psi }})$. 
\end{prop}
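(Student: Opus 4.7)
The plan is to reduce the statement to Lemma~\ref{lem:IP} applied to the image $\psi(f)$, expanded monomially. First I would write $f = \sum_a c_a x^a$ with $a=(i_1,\ldots,i_n)$ and $x^a = x_1^{i_1}\cdots x_n^{i_n}$, so that
\[
\psi(f) = \sum_a c_a\,\psi(x_1)^{i_1}\cdots \psi(x_n)^{i_n}.
\]
The aim is to identify the $\uu$-initial form of this sum by picking out the terms of top $\uu$-degree.

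Next I would use the multiplicativity (\ref{eq:deg fg}) of $\degu$ and $(\cdot)^{\uu}$ to compute, for each $a \in \supp(f)$,
\[
\degu\psi(x^a) = \sum_l i_l\,\degu\psi(x_l) = a\cdot \uu_{\psi},
\qquad
\psi(x^a)^{\uu} = \prod_l (\psi(x_l)^{\uu})^{i_l} = \psi^{\uu}(x^a),
\]
the last equality holding since $\psi^{\uu}$ is the $k$-algebra homomorphism sending $x_l$ to $\psi(x_l)^{\uu}$. Hence every summand in the expansion of $\psi(f)$ has $\uu$-degree equal to $a\cdot \uu_{\psi}$, and the maximum of these values over $a \in \supp(f)$ is exactly $\degu_{\psi} f$.

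Finally I would apply Lemma~\ref{lem:IP} to the expression $\psi(f) = \sum_a c_a \psi(x^a)$. The index set $I$ there becomes $\{a \in \supp(f) : a\cdot\uu_{\psi}=\degu_{\psi}f\}$, and the corresponding sum of initial forms is
\[
\sum_{a\in I} c_a\,\psi(x^a)^{\uu} = \sum_{a\in I} c_a\,\psi^{\uu}(x^a) = \psi^{\uu}(f^{\uu_{\psi}}).
\]
The hypothesis $\psi^{\uu}(f^{\uu_{\psi}})\neq 0$ is precisely the nonvanishing condition required by Lemma~\ref{lem:IP}, so the lemma yields $\psi(f)^{\uu} = \psi^{\uu}(f^{\uu_{\psi}})$, as desired.

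There is no real obstacle here; the proof is a bookkeeping exercise, and the only subtlety is recognizing that the hypothesis $\psi^{\uu}(f^{\uu_{\psi}})\neq 0$ is exactly tailored to rule out cancellation among the top-degree terms so that Lemma~\ref{lem:IP} is applicable.
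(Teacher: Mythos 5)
Your proof is correct and follows essentially the same route as the paper's: expand $f$ monomially, use (\ref{eq:deg fg}) to identify the $\uu$-degree and $\uu$-initial form of each $\psi(x^a)$, and apply Lemma~\ref{lem:IP} with the nonvanishing hypothesis ruling out cancellation among the top-degree terms.
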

\begin{proof}
Write $f$ as in (\ref{eq:f}), 
and set $f_i=u_ix_1^{i_1}\cdots x_n^{i_n}$ 
for each $i=(i_1,\ldots ,i_n)$. 
Then, 
we have $f=\sum _if_i$ and $\psi (f)=\sum _i\psi (f_i)$. 
We apply Lemma~\ref{lem:IP} to 
$(\psi (f_i))_{i\in \supp (f)}$. 
Note that $\degu \psi (f_i)=i\cdot \uu _{\psi }$ 
and $\psi (f_i)^{\uu }=\psi ^{\uu }(f_i)$ 
for each $i\in \supp (f)$ by (\ref{eq:deg fg}). 
Hence, 
we get 
$$
\delta 
=\max \{ \degu \psi (f_i)\mid i\in \supp (f)\} 
=\max \{ i\cdot \uu _{\psi }\mid i\in \supp (f)\} 
=\deg _{\uu _{\psi }}f, 
$$
and so 
$$
I=\{ i\mid \degu \psi (f_i)=\delta \} 
=\{ i\mid i\cdot \uu _{\psi }=\deg _{\uu _{\psi }}f\} . 
$$ 
Thus, 
we have $\sum _{i\in I}f_i=f^{\uu _{\psi }}$. 
Therefore, we know that 
$$
\sum _{i\in I}\psi (f_i)^{\uu }
=\sum _{i\in I}\psi ^{\uu }(f_i)
=\psi ^{\uu }\left( \sum _{i\in I}f_i\right) 
=\psi ^{\uu }(f^{\uu _{\psi }})\neq 0. 
$$
By Lemma~\ref{lem:IP}, 
it follows that 
$\psi (f)^{\uu }=\left(\sum _i\psi (f_i)\right)^{\uu }$ 
is equal to the left-hand side of the preceding equality, 
and hence to $\psi ^{\uu }(f^{\uu _{\psi }})$. 
\end{proof}

Fix any $1\leq l\leq m$. 
Let $\kx ^{\psi }$ and $\kx ^{\psi ^{\uu }}$ 
be the $k$-subalgebras of $\kx $ defined as the 
inverse images of $k[x_1,\ldots ,x_l]$ 
by $\psi $ and $\psi ^{\uu }$, respectively. 
Then, 
we have the following theorem.

\begin{thm}\label{thm:IP}
\noindent{\rm (i)} 
$f^{\uu _{\psi }}$ belongs to 
$\kx ^{\psi ^{\uu }}$ 
for each $f\in \kx ^{\psi }$. 

\noindent{\rm (ii)} 
Let $S\subset \kx \sm \zs $ 
be such that $\trd _kk[S]=n$. 
If $\psi ^{\uu }$ is injective 
and $\psi ^{\uu }(\kx )$ 
is not contained in $k[x_1,\ldots ,x_l]$, 
then there exists $g\in S$ 
such that $f^{\uu _{\psi }}$ 
is not divisible by $g$ 
for any $f\in \kx ^{\psi}\sm \zs$. 
\end{thm}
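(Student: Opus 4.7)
I would derive this directly from Proposition~\ref{prop:IP}. Given $f\in \kx^{\psi}$, I split into cases according to whether $\psi^{\uu}(f^{\uu_{\psi}})$ vanishes. If it does, then trivially $\psi^{\uu}(f^{\uu_{\psi}})\in k[x_1,\ldots ,x_l]$, so $f^{\uu_{\psi}}\in \kx^{\psi^{\uu}}$. Otherwise Proposition~\ref{prop:IP} applies and yields $\psi^{\uu}(f^{\uu_{\psi}})=\psi (f)^{\uu}$. Since $\psi(f)\in k[x_1,\ldots ,x_l]$, its support avoids the indices corresponding to $x_{l+1},\ldots ,x_m$; the $\uu$-initial form keeps only a subset of the supporting monomials, so $\psi(f)^{\uu}$ again lies in $k[x_1,\ldots ,x_l]$. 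Therefore $f^{\uu_{\psi}}\in \kx^{\psi^{\uu}}$ in either case.

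\textbf{Plan for part (ii).} Here I would combine Lemma~\ref{lem:closed} with a transcendence-degree count. Writing $\kxb =k[x_1,\ldots ,x_l][x_{l+1},\ldots ,x_m]$ and using that $\psi^{\uu}$ is injective by hypothesis, Lemma~\ref{lem:closed} tells me that $\kx^{\psi^{\uu}}$ is both algebraically closed and factorially closed in $\kx$. The assumption $\psi^{\uu}(\kx )\not\subset k[x_1,\ldots ,x_l]$ says that $\kx^{\psi^{\uu}}$ is a proper $k$-subalgebra of $\kx$. A proper algebraically closed subalgebra of $\kx$ must have transcendence degree strictly less than $n$, for otherwise $\kx$ would be algebraic over $\kx^{\psi^{\uu}}$ and hence coincide with it. Since $\trd_kk[S]=n>\trd_k\kx^{\psi^{\uu}}$, the whole of $S$ cannot lie in $\kx^{\psi^{\uu}}$; pick any $g\in S$ with $g\notin \kx^{\psi^{\uu}}$.

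\textbf{Finishing step and anticipated difficulty.} I claim this $g$ is the witness required. Take any $f\in \kx^{\psi}\sm \zs$; then $f^{\uu_{\psi}}\neq 0$ (since the initial form of a nonzero polynomial is nonzero by construction), and part~(i) places $f^{\uu_{\psi}}$ in $\kx^{\psi^{\uu}}$. If $g$ divided $f^{\uu_{\psi}}$, writing $f^{\uu_{\psi}}=gh$ with $h\in \kx\sm \zs$ and applying the factorial closedness of $\kx^{\psi^{\uu}}$ in $\kx$ would force $g\in \kx^{\psi^{\uu}}$, contradicting the choice of $g$. I do not foresee any substantial obstacle: the whole argument is essentially a packaging of Proposition~\ref{prop:IP} for~(i) and Lemma~\ref{lem:closed} plus the transcendence-degree remark for~(ii). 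The only mild subtleties are the case split $\psi^{\uu}(f^{\uu_{\psi}})=0$ in part (i) and the verification that $f\neq 0$ implies $f^{\uu_{\psi}}\neq 0$, so that the divisibility assertion in (ii) is genuinely nontrivial.
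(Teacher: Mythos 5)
Your proof is correct. Part (i) coincides with the paper's argument: the same case split on whether $\psi ^{\uu }(f^{\uu _{\psi }})$ vanishes, followed by Proposition~\ref{prop:IP} and the observation that an initial form of an element of $k[x_1,\ldots ,x_l]$ stays in $k[x_1,\ldots ,x_l]$. In part (ii) you reach the conclusion by a slightly different, and in fact cleaner, route. The paper argues by contradiction: it selects $y_1,\ldots ,y_n\in S$ with $\trd _kk[y_1,\ldots ,y_n]=n$, assumes each $y_i$ divides some $f_i^{\uu _{\psi }}$, forms the product $f_1\cdots f_n$, and uses the multiplicativity of initial forms (\ref{eq:deg fg}) together with factorial closedness to force all of $y_1,\ldots ,y_n$ into $\kx ^{\psi ^{\uu }}$, contradicting $\trd _k\kx ^{\psi ^{\uu }}<n$. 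You instead choose the witness up front: since $\trd _kk[S]=n>\trd _k\kx ^{\psi ^{\uu }}$, some $g\in S$ must lie outside $\kx ^{\psi ^{\uu }}$, and then a single application of factorial closedness to $f^{\uu _{\psi }}=gh$ (legitimate because $f^{\uu _{\psi }}\neq 0$ lies in $\kx ^{\psi ^{\uu }}$ by part (i)) rules out divisibility by $g$ for every $f\in \kx ^{\psi }\sm \zs $ at once. This avoids the product construction and the appeal to (\ref{eq:deg fg}) entirely; both arguments rest on the same two pillars, namely part (i) and Lemma~\ref{lem:closed} plus the transcendence-degree count, so nothing is lost, and your version is a little shorter.
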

\begin{proof}
(i) 
Since $\kx ^{\psi ^{\uu }}$ 
is the inverse image of $k[x_1,\ldots ,x_l]$ 
by $\psi ^{\uu }$, 
it suffices to check that 
$\psi ^{\uu }(f^{\uu _{\psi }})$ 
belongs to $k[x_1,\ldots ,x_l]$. 
This is clear if $\psi ^{\uu }(f^{\uu _{\psi }})=0$. 
If $\psi ^{\uu }(f^{\uu _{\psi }})\neq 0$, 
then we have $\psi ^{\uu }(f^{\uu _{\psi }})=\psi (f)^{\uu }$ 
by Proposition~\ref{prop:IP}. 
This is an element of 
$k[x_1,\ldots ,x_l]$, 
since so is $\psi (f)$ 
by the choice of $f$.

(ii) 
Since $\psi ^{\uu }$ is injective by assumption, 
$\kx ^{\psi ^{\uu }}$ is algebraically closed 
and factorially closed in $\kx $ by Lemma~\ref{lem:closed}. 
Since $\psi ^{\uu }(\kx )$ 
is not contained in $k[x_1,\ldots ,x_l]$, 
we have $\kx ^{\psi ^{\uu }}\neq \kx $. 
Hence, 
$\trd _k\kx ^{\psi ^{\uu }}$ is less than $n$. 
Since $\trd _kk[S]=n$ by assumption, 
we may find $y_1,\ldots ,y_n\in S$ 
such that $\trd _kk[y_1,\ldots ,y_n]=n$. 
Suppose that the assertion is false. 
Then, 
for each $1\leq i\leq n$, 
there exists 
$f_i\in \kx ^{\psi }\sm \zs $ 
such that $f_i^{\uu _{\psi }}$ is divisible by $y_i$. 
Then, 
$(f_1\cdots f_n)^{\uu _{\psi }}$ 
belongs to $\kx ^{\psi ^{\uu }}$ by (i), 
and is divisible by $y_1,\ldots ,y_n$ due to (\ref{eq:deg fg}). 
Since $\kx ^{\psi ^{\uu }}$ 
is factorially closed in $\kx $, 
it follows that $y_1,\ldots ,y_n$ 
belong to $\kx ^{\psi ^{\uu }}$, 
a contradiction. 
\end{proof}

Now, 
let us complete the proof of Theorem~\ref{thm:main}. 
Note that $\Gamma $ is torsion-free 
due to the structure of total ordering. 
Hence, 
we may regard $\Gamma $ 
as a subgroup of $\Q \otimes _{\Z }\Gamma $ 
which also has a structure of totally ordered additive group 
induced from $\Gamma $. 
Write 
$$
\phi (x_i)=p_i(z)=\sum _{j\geq 0}p_{i,j}z^j
$$
for $i=1,\ldots ,n$, 
where $p_{i,j}\in \kxb $ for each $j$. 
Since $\phi (\kx )$ 
is not contained in $\kxb $ by assumption, 
we have $p_{i,j}\neq 0$ 
for some $1\leq i\leq n$ and $j\geq 1$. 
Following \cite{DHM}, 
we define 
$\uu =(\vv,-\degv \phi )
\in (\Q \otimes _{\Z }\Gamma )^{m+1}$, 
where 
$$
\degv \phi :=\max \left\{ 
\frac{1}{j}\left(
\degv p_{i,j}-w_i\right)
\Bigm| i=1,\ldots ,n,\ j\geq 1 \right\} . 
$$
We show that $\uu _{\phi }=\w $, 
$\phi ^{\uu }$ is injective, 
and $\phi ^{\uu }(\kx )$ is not contained in $\kxb $. 
Then, 
the proof is completed by Theorem~\ref{thm:IP} (ii).

By the maximality of $\degv \phi $, 
we have 
$\degv p_{i,j}-w_i\leq j\degv \phi $, 
and so 
$$
\degu p_{i,j}z^j=\degv p_{i,j}-j\degv \phi \leq w_i 
$$
for each $1\leq i\leq n$ and $j\geq 1$. 
Since 
$\degu p_{i,0}z^0=\degv p_{i,0}=\degv p_i(0)=w_i$, 
it follows that 
$\degu \phi (x_i)
=\max \{ \degu p_{i,j}z^j\mid j\geq 0\} =w_i$ 
for each $i$. 
This proves that $\uu _{\phi }=\w $. 
Let $J(i)$ be the set of $j\geq 1$ 
such that $\degu p_{i,j}z^j=w_i$ 
for each $i$. 
Then, 
we have 
$$
\phi ^{\uu }(x_i)=\phi (x_i)^{\uu }
=\sum _{j\in J(i)}p_{i,j}^{\vv }z^j+p_i(0)^{\vv}. 
$$ 
Since $p_i(0)^{\vv}$'s 
are algebraically independent over $k$, 
we see that $\phi ^{\uu }(x_i)$'s 
are algebraically independent over $k$. 
Therefore, 
$\phi ^{\uu }$ is injective. 
By the definition of $\degv \phi $, 
there exist $1\leq i_0\leq n$ 
and $j_0\geq 1$ such that 
$\degv p_{i_0,j_0}-w_{i_0}=j_0\degv \phi $. 
Then, 
we have 
$\degu p_{i_0,j_0}z^{j_0}=w_{i_0}$. 
Hence, 
the monomial 
$z^{j_0}$ appears in $\phi ^{\uu }(x_{i_0})$ 
with coefficient $p_{i_0,j_0}^{\vv }\neq 0$. 
Thus, 
$\phi ^{\uu }(x_{i_0})$ 
does not belong to $\kxb $. 
Therefore, 
$\phi ^{\uu }(\kx )$ is not contained in $\kxb $. 
This completes the proof of Theorem~\ref{thm:main}.

\section{Remarks on stable coordinates 
and stable $\Ga $-invariants}
\label{sect:rmk}
\setcounter{equation}{0}

Shpilrain-Yu~\cite{SY} remarked that 
every stable coordinate of $\C [\x ]$ 
is a coordinate of $\C [\x ]$ for $n=2,3$. 
In the case of $n=2$, 
their proof is based on 
the theorem of Abhyankar-Moh~\cite{AM} and Suzuki~\cite{Suzuki}, 
and the cancellation theorem of 
Abhyankar-Heinzer-Eakin~\cite{AEH}. 
Hence, the result is valid not only for $\C $, 
but also for any field of characteristic zero. 
By Proposition~\ref{prop:steadfast} below, 
the statement holds 
for a more general class of commutative rings. 
Recall that a commutative ring $k$ with identity is said to be 
{\it steadfast} if 
the following condition holds 
for any commutative ring $A$ containing $k$ 
(cf.~\cite{Hamann}): 
{\it If the polynomial rings 
$k[x_1,\ldots ,x_n]$ and $A[x_2,\ldots ,x_n]$ 
are $k$-isomorphic for some $n\geq 1$, 
then $k[x_1]$ and $A$ are $k$-isomorphic. }

The following remark is due to Amartya K. Dutta 
who answered the author's question 
on his visit to Indian Statistical Institute 
in 2013 (see also~\cite{BD}).

\begin{prop}\label{prop:steadfast}
Let $k$ be a commutative ring with identity 
such that $k[x_1]$ is steadfast. 
If $f\in k[x_1,x_2]$ is a coordinate of 
$k[x_1,\ldots ,x_n]$ for some $n\geq 3$, 
then $f$ is a coordinate of $k[x_1,x_2]$. 
\end{prop}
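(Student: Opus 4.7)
The plan is to present $k[x_1,\ldots,x_n]$ as a polynomial ring over $R:=k[f]$ in two different ways and then invoke the steadfast hypothesis once to cancel all the excess variables simultaneously. Set $A:=k[x_1,x_2]$; since $f\in A$, there is a chain of $k$-subalgebras $R\subset A\subset k[x_1,\ldots,x_n]$.

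First I would verify that $R$ inherits steadfastness from $k[x_1]$: because $f$ sits in a polynomial system of generators $f,f_2,\ldots,f_n$ of $k[x_1,\ldots,x_n]$, it is transcendental over $k$, so the map $x_1\mapsto f$ yields a $k$-isomorphism $k[x_1]\cong k[f]=R$, and steadfastness is a ring-theoretic property, preserved under isomorphism. Next, I would record the two presentations
$$
A[x_3,\ldots,x_n]\;=\;k[x_1,\ldots,x_n]\;=\;R[f_2,\ldots,f_n],
$$
a polynomial ring over $A$ in $n-2$ variables on the left and over $R$ in $n-1$ variables on the right. Tracing the embeddings $R\hookrightarrow A\hookrightarrow A[x_3,\ldots,x_n]$ and $R\hookrightarrow R[f_2,\ldots,f_n]$ shows that both presentations induce the same $R$-algebra structure on $k[x_1,\ldots,x_n]$, so the equality above is an $R$-algebra isomorphism.

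Finally I would apply the steadfast property of $R$ with the integer $N:=n-1\geq 2$: from the $R$-algebra isomorphism
$$
R[y_1,\ldots,y_N]\;\cong_R\;A[y_2,\ldots,y_N],
$$
I conclude $R[y_1]\cong_R A$, so $A=R[g]$ for some $g\in A$. This is precisely the assertion that $k[x_1,x_2]=k[f,g]$, hence $f$ is a coordinate of $k[x_1,x_2]$. I do not anticipate a serious obstacle in this argument; the only delicate point is confirming that the two descriptions of $k[x_1,\ldots,x_n]$ share the same $R$-algebra structure, and this is immediate because the $R$-action on each side is given by multiplication by the same element $f$ of the common ambient ring.
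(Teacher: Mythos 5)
Your proposal is correct and follows essentially the same route as the paper: both present $k[x_1,\ldots,x_n]$ simultaneously as a polynomial ring in $n-2$ variables over $A=k[x_1,x_2]$ and in $n-1$ variables over $k[f]\cong k[x_1]$, and then apply steadfastness of $k[f]$ once to conclude $A=k[f][g]$. The extra checks you flag (transcendence of $f$ over $k$ and agreement of the $k[f]$-algebra structures) are implicit in the paper's isomorphism $y_i\mapsto f_i$, so there is no substantive difference.
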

\begin{proof}
Put $A=k[x_1,x_2]$ and $k'=k[f]$. 
Then, 
there exist 
$f_2,\ldots ,f_n\in k[x_1,\ldots ,x_n]$ 
such that 
$A[x_3,\ldots ,x_n]=k'[f_2,\ldots ,f_n]$. 
Hence, 
the polynomial ring 
$k'[y_2,\ldots ,y_n]$ 
over $k'$ 
is $k'$-isomorphic to $A[x_3,\ldots ,x_n]$ 
via the isomorphism defined by 
$y_i\mapsto f_i$ for each $i$. 
Since $k'\simeq k[x_1]$ is steadfast by assumption, 
it follows that 
$k'[y_2]$ and $A$ are $k'$-isomorphic. 
Thus, 
we have $A=k'[g]=k[f,g]$ for some $g\in A$. 
Therefore, 
$f$ is a coordinate of $A$. 
\end{proof}

For example, 
integrally closed domains are steadfast 
due to Asanuma~\cite{Asanuma} 
(see also \cite{AEH} and \cite{Hamann}). 
If $k$ is an integrally closed domain, 
then $k[x_1]$ is also  an integrally closed domain. 
Hence, 
every stable coordinate of 
$k[x_1,x_2]$ is a coordinate of $k[x_1,x_2]$ 
by Proposition~\ref{prop:steadfast}. 
On the other hand, 
Neena Gupta pointed out that 
Bhatwadekar-Dutta~\cite[Example 4.1]{BD} 
constructed a ``residual variable" 
of $k[x_1,x_2]$ which is not a coordinate of $k[x_1,x_2]$ 
when $k=\Z _{(2)}[2\sqrt{2}]$. 
Here, 
for a commutative noetherian ring $k$, 
the notion of residual variable of $k[x_1,x_2]$ 
is equivalent to the notion of 
stable coordinate of $k[x_1,x_2]$ 
(cf.~\cite[Theorem A]{BD}). 
Therefore, 
not every stable coordinate of $k[x_1,x_2]$ 
is a coordinate of $k[x_1,x_2]$.

In the case of $n=3$, 
Shpilrain-Yu used the cancellation theorem 
of Miyanishi-Sugie~\cite{MS} and Fujita~\cite{Fujita}, 
and the result of Kaliman~\cite{Kaliman} 
to show that every stable coordinate of $\C [\x ]$ 
is a coordinate of $\C [\x ]$. 
Neena Gupta remarked that 
one can prove a similar statement 
over any field of characteristic zero 
by using the results of Sathaye~\cite{Sathaye} 
and Bass-Connell-Wright~\cite{BCW} 
instead of \cite{Kaliman}.

Next, 
we discuss stable $\Ga $-invariants of $\kx $. 
In what follows, we assume that 
$k$ is a field of characteristic zero. 
Then, 
for a $k$-subalgebra $A$ of $\kx $, 
we have $A=\kx ^{\Ga }$ for some $\Ga $-action on $\kx $ 
if and only if $A=\ker (D)$ 
for some locally nilpotent $k$-derivation of $\kx $. 
The following result~\cite[Corollary 5.40]{Fbook} 
(see also \cite{F}) is a corollary to 
\cite[Theorem 5.37]{Fbook} 
which is due to Daigle and Freudenburg.

\begin{prop}\label{prop:DF}
Assume that $n\geq 2$. 
Let $D$ be a locally nilpotent $k$-derivation 
of $\kx $ with $k[x_1,x_2]\cap \ker (D)\neq k$. 
Then, 
we have either $D(x_1)=D(x_2)=0$, 
or $k[x_1,x_2]\cap \ker (D)\subset k[g]\subset \ker D$ 
for some coordinate $g$ of $k[x_1,x_2]$. 
\end{prop}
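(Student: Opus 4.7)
The plan splits along the stated dichotomy. If $D(x_1)=D(x_2)=0$, then $k[x_1,x_2]\subset \ker(D)$ and the first alternative holds, so nothing more is needed. I therefore assume henceforth that at least one of $D(x_1),D(x_2)$ is nonzero, so that $k[x_1,x_2]$ is not contained in $\ker(D)$.

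Set $A:=k[x_1,x_2]\cap \ker(D)$. A crucial preliminary observation is that $A$ is factorially closed in $k[x_1,x_2]$: both $\ker(D)$ (the kernel of a locally nilpotent $k$-derivation in characteristic zero) and $k[x_1,x_2]$ (a polynomial subring) are factorially closed in $\kx$, and factorial closedness is preserved under intersection. Since by the reduction above $A\subsetneq k[x_1,x_2]$ and $A$ is algebraically closed in $k[x_1,x_2]$, one has $\trd _k A\leq 1$; the standing hypothesis $A\neq k$ then upgrades this to $\trd _k A=1$.

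With these preliminary reductions in place, I would invoke Theorem 5.37 of Freudenburg's book (the Daigle--Freudenburg theorem cited in the excerpt), which analyzes precisely the situation of a locally nilpotent derivation on $\kx$ whose kernel meets $k[x_1,x_2]$ nontrivially and whose restriction to $k[x_1,x_2]$ is nontrivial. That theorem produces a coordinate $g$ of $k[x_1,x_2]$ lying in $\ker(D)$ such that the $1$-dimensional factorially closed subring $A$ is contained in $k[g]$. Chaining the inclusions yields $A\subset k[g]\subset \ker(D)$, which is the second alternative.

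The principal difficulty is the last step: producing $g$ as a \emph{coordinate} of $k[x_1,x_2]$, not merely as an element generating a $1$-dimensional factorially closed overring of $A$. Factorial closedness alone forces $A=k[f]$ for some $f\in k[x_1,x_2]$, but it does not by itself force $f$ to be a coordinate. The substantive content of the Daigle--Freudenburg theorem lies in turning the ambient LND structure on $\kx$ into an LND structure on $k[x_1,x_2]$ (or on a closely related two-variable ring) and then applying Rentschler's classification, which in characteristic zero guarantees that kernels of nonzero LNDs of $k[x_1,x_2]$ are generated by coordinates. That descent from $n$ variables to $2$ variables is the step I would expect to be the main obstacle, and it is the reason this proposition is presented in the excerpt as a consequence of the Daigle--Freudenburg theorem rather than being proved from scratch.
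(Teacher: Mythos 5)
The paper gives no proof of this proposition at all: it is quoted verbatim as \cite[Corollary 5.40]{Fbook}, a corollary of the Daigle--Freudenburg theorem \cite[Theorem 5.37]{Fbook}, so your strategy of disposing of the trivial case $D(x_1)=D(x_2)=0$ and then deferring the substantive content to that cited theorem is exactly the route the paper takes. Your preliminary remarks (factorial closedness of $k[x_1,x_2]\cap\ker(D)$, the transcendence degree count) are correct and consistent with how the paper uses the result afterwards.
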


In the situation of Proposition~\ref{prop:DF}, 
we have $k[x_1,x_2]\cap \ker (D)=k[g]$ 
unless $D(x_1)=D(x_2)=0$, 
since $k[x_1,x_2]\cap \ker (D)$ and $k[g]$ 
are both algebraically closed in $k[x_1,x_2]$ 
and of transcendence degree one over $k$. 
Hence, 
for any $\Ga $-action on $\kx $ 
with $k[x_1,x_2]\cap \kx ^{\Ga }$ 
not equal to $k$ or $k[x_1,x_2]$, 
there exists a coordinate $g$ of $k[x_1,x_2]$ 
such that $k[x_1,x_2]\cap \kx ^{\Ga }=k[g]$.

Now, 
we show that every stable $\Ga $-invariant of 
$k[x_1,x_2]$ is an invariant 
for a nontrivial $\Ga $-action on $k[x_1,x_2]$. 
Take any stable $\Ga $-invariant 
$f$ of $k[x_1,x_2]$ not belonging to $k$. 
By definition, 
there exists a $\Ga $-action on $\kx $ 
such that $\kx ^{\Ga }$ contains $f$, 
but does not contain $k[x_1,x_2]$. 
Then, 
$A:=k[x_1,x_2]\cap \kx ^{\Ga }$ 
is not equal to $k$ or $k[x_1,x_2]$. 
Hence, 
we have $A=k[g]$ 
for some coordinate $g$ of $k[x_1,x_2]$ 
as remarked. 
As mentioned after Theorem~\ref{thm:coords}, 
$k[g]=k[x_1,x_2]^{\Ga }$ holds 
for some nontrivial $\Ga $-action on $k[x_1,x_2]$. 
Since $f$ is an element of 
$A=k[g]=k[x_1,x_2]^{\Ga }$, 
we know that $f$ 
is an invariant for this action of $\Ga $.

\noindent
Department of Mathematics and Information Sciences\\ 
Tokyo Metropolitan University \\
1-1  Minami-Osawa, Hachioji \\
Tokyo 192-0397, Japan\\
kuroda@tmu.ac.jp

\end{document}